\documentclass[10pt]{amsart}

\parindent=0pt

% \usepackage{mathptmx}      % use Times fonts if available on your TeX system

% insert here the call for the packages your document requires
%\usepackage{latexsym}
% etc.
%
%\usepackage{refcheck}
\usepackage{amsfonts,amsmath,amsthm}
\usepackage{amssymb}
\usepackage{latexsym}
\usepackage{epsfig}
\usepackage{graphicx,color,graphics}

\setcounter{MaxMatrixCols}{10}

% please place your own definitions here and don't use \def but
% \newcommand{}{}
%

% \newcommand{\real}{\mbox{${{I\!\!R}}$}}
 \newtheorem{lemma}{Lemma}[section]
 
 \newtheorem{theorem}[lemma]{Theorem}
 
 \newtheorem{remark}[lemma]{Remark}

\newcommand{\N}{\ifmmode{{\Bbb N}}\else{\mbox{${\Bbb N}$}}\fi}
\newcommand{\R}{\ifmmode{{\Bbb R}}\else{\mbox{${\Bbb R}$}}\fi}

\newcommand{\cal}{\mathcal}

\title[Transverse vibrations in a heated microbeam]{Stabilization of transverse vibrations of an inhomogeneous  Euler-Bernoulli beam with a thermal effect}

\author{Octavio Vera} 
\address{Department of Mathematics, Universidad del B{\'i}o-B{\'i}o, PO Box 5C, Concepci{\'o}n, Chile} 
\email{{\tt overa@ubiobio.cl}}

\author{Amelie Rambaud} 
\address{Department of Mathematics, Universidad del B{\'i}o-B{\'i}o, PO Box 5C, Concepci{\'o}n, Chile} 
\email{{\tt arambaud@ubiobio.cl}}

\author{Roberto E. Rozas} 
\address{Department of Physics,  Universidad del B{\'i}o-B{\'i}o, PO Box 5C, Concepci{\'o}n, Chile} 
\email{{\tt rrozas@ubiobio.cl}}

\date{}

\begin{document}

 \pagestyle{plain} \thispagestyle{plain}
%-Title

%+Abstract
\begin{abstract}
\noindent We consider an inhomogeneous Euler-Bernoulli (EB) beam of length $L$
clamped at both ends and subject to : an external frictional damping and a thermal effect. We prove the well-posedness of the model and analyze the behavior  of the
solution as $t \rightarrow + \infty.$ The existence is
proved using semigroup theory, and the exponential stabilization of solutions
is obtained considering multiplier technique. A numerical illustration of the energy decay is given, based on initial data close to a real physical experiment. The results generalize the ones obtained 
where there is no heat conduction taken into account (see \cite{am2eulerbernoulli-rate, am1, liu1} for EB type models, \cite{go1, go22} for wave like equations), together with the coupled systems of type damped wave equation with heat conduction \cite{go3, go4}. 
\end{abstract}
%-Abstract

%+Title
\maketitle

\noindent{\it Keywords}: semigroup. Asymptotic behavior. Euler-Bernoulli. heat conduction. damping. Coupled system. multiplier.
\renewcommand{\theequation}{\thesection.\arabic{equation}}
\setcounter{equation}{0}

\section{Introduction}

Simplified models of beams' vibrations have interested researchers for decades. One of the main issues, both from a physical and mathematical point of view for such problems is the question of stabilization of the oscillations when time goes to infinity. There exist various models of transverse vibrations of beams. Here, we focus on a Euler-Bernoulli (EB) like model,  which is the most commonly used model because it is simple and provides reasonable engineering approximations.  See for example  \cite{di1, me1, tho1, we1, dynamics-transverse-beam1999}.  However, the standard homogeneous EB equation can be generalized in order to account for inhomogeneity of the material, heat conduction, and dampings (either viscous, or frictional). Let us also mention a model where both transverse and axial vibrations are investigated \cite{inhom-beam-stab}. 
In the present work, we investigate a model of transverse vibrations, derived from the model proposed in \cite {abouel-zenkour2015}.  Let $L >0$ be the length of a axially symmetric thin beam, along the axis $x$.  We also suppose that the microbeam is subjected to heat and to a non uniform frictional damping.  Then the transverse displacement $u$ and the temperature field $\theta$ satisfy the following set of equations (without dimension), for all $t > 0$, for all $x \ \in \ (0, L)$:
\begin{align}
\label{101}& u_{tt} + (p(x)\,u_{xx})_{xx} + 2\,q(x)\,u_{t}   + 
\kappa\,\theta_{xx} = 0,
 \\
\label{102}& \theta_{t} - \eta\,\theta_{xx} - \kappa \,u_{xxt}   = 0,
\end{align}
\noindent where $p$ is a non negative function describing the non homogeneity of the beam, and $q$, also non negative stands for the force of the frictional damping. The constants $\kappa, \, \eta$ are assumed to be strictly positive. 
 Note that system \eqref{101}-\eqref{102} is a simplified version of the one investigated in \cite{abouel-zenkour2015}. Indeed, although we added inhomogeneity and damping, the authors in \cite{abouel-zenkour2015} consider a more general law of heat conduction and assume the beam is moving along the $x$ axis at a constant velocity, and that the temperature is also damped. Moreover the coupling constants are not the same in their case (here we only have $\kappa > 0$), so their problem is more difficult to tackle.   

System \eqref{101}-\eqref{102} is complemented by Dirichlet boundary conditions
\begin{equation}
\label{103}u(0,\,t) = u(L,\,t)=0,\, \,  u_{x}(0,\,t) =
u_{x}(L,\,t)=0,\, \,  \theta(0,\,t) = \theta(L,\,t)=0,
\end{equation}
that hold for all $t > 0$, describing a \emph{clamped} beam at both ends. And finally we have initial conditions:
\begin{align}
\label{104} u(x,\,0)=u_{0}(x),\quad u_{t}(x,\,0)=u_{1}(x),\quad
\theta(x,\,0)=\theta_{0}(x),\quad x\in[0,\,L].
\end{align}

 Our purpose in this work is to provide well-posedness of system \eqref{101}-\eqref{104} under smoothness assumption on the inhomogeneity functions $p$, $q$, and to investigate the long time behavior of the energy, that is to say wether or not the energy decays exponentially.  Indeed, the study of the total energy functional  ${\cal E}:\mathbb{R}^{+}\rightarrow\mathbb{R}^{+}$, defined, in our case, for all   $t$, by
\begin{equation}
\label{201} \mathcal{E}(t) =  \frac{1}{2} \left[\int_{0}^{L}p(x)\,
u_{xx}^{2}\ dx + \int_{0}^{L}u_{t}^{2}\ dx +
\int_{0}^{L}|\theta|^{2}\ dx \right],
\end{equation}
when $t \ \rightarrow \ \infty$, answers the question of the stabilization of the vibrations of the structure. Rates of decay for the total energy have been established for various dissipative models, see for example \cite{am2eulerbernoulli-rate, chen1, chen2, ko1, ko2, lagnese1, lagnese2, lions1} and references therein.  Up to our knowledge, the exponential stability has not been investigated for our model, and provides a first answer to the stabilization of microbeams under study in \cite{abouel-zenkour2015}, with of course only the Fourier law of heat, and a non moving structure in the $x$ direction, for a start.  Moreover, our result extends the results of \cite{go1, go22} for inhomogeneous beams without heat, and the ones of \cite{go3, go4} for damped waves coupled with heat.

The rest of the paper is organized as follows. In Section \ref{sec:semigroup} we state the mathematical hypothesis and prove well-posedness of the initial boundary value problem \eqref{101}-\eqref{104} using a semigroup argument.  In Section \ref{sec:expo} we prove the exponential decay of the energy \eqref{201} which is the main result. Finally, in Section \ref{sec:numeric} we illustrate numerically the exponential decay of the energy by considering a simple Finite Difference  discretization, in the spirit of \cite{FEM-stab-beam-Zhang} and initial conditions close to a physically relevant situation, inspired by \cite{wang-xu2001}.

\renewcommand{\theequation}{\thesection.\arabic{equation}}
\setcounter{equation}{0}

\section{Well-posedness}\label{sec:semigroup}

In this section, we prove existence and uniqueness of solution to \eqref{101}-\eqref{104}.  Let us start by introducing the main assumptions and notations.

\subsection{Preliminaries}

Throughout this paper, we always
assume that $q(x)$ and $p(x)$ are positive definite functions,
$q(x)\in L^{\infty}(0,\,L),$ $p(x)\textbf{}\in H^{2}(0,\,L)$ and
that there exist constants $\alpha_{1},$ $\alpha_{2},$ $\alpha_{3}$
and $\alpha_{4}$ such that
\begin{equation}
\begin{aligned}
\label{007}0<\alpha_{1}\leq p(x)\leq
\alpha_{2},\quad\forall\,x\in[0,\,L] \\
0<\alpha_{3}\leq q(x)\leq \alpha_{4},\quad\forall\,x\in[0,\,L]
\end{aligned}
\end{equation}

Throughout this paper  $C$ is a generic constant, not necessarily
the same at each occasion(it will change from line to line), which
depends in an increasing way on the indicated quantities.

We first establish an equality describing the dissipativity property of our system.

\begin{lemma}\label{lemma:energy-equal}
For every smooth enough solution of the system \eqref{101}-\eqref{104} the total
energy ${\cal E}:\mathbb{R}^{+}\rightarrow\mathbb{R}^{+}$, defined by \eqref{201}   satisfies
\begin{equation}
\label{202} \frac{d}{dt}\mathcal{E}(t) = -2\int_{0}^{L}
q(x)\,\vert u_{t}\vert^{2}\ dx - \eta\int_{0}^{L}|\theta_{x}|^{2}\ dx.
\end{equation}
\end{lemma}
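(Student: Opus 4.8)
The plan is to use the standard energy identity method: multiply \eqref{101} by $u_{t}$, multiply \eqref{102} by $\theta$, integrate both over $(0,L)$, integrate by parts using the boundary conditions, and add the two resulting identities so that the coupling terms cancel.

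First I would multiply \eqref{101} by $u_{t}$ and integrate over $(0,L)$. The term $\int_{0}^{L}u_{tt}u_{t}\,dx$ equals $\tfrac12\tfrac{d}{dt}\int_{0}^{L}u_{t}^{2}\,dx$. For the bending term $\int_{0}^{L}(p\,u_{xx})_{xx}u_{t}\,dx$ I integrate by parts twice; all boundary contributions vanish because differentiating \eqref{103} in time gives $u_{t}(0,t)=u_{t}(L,t)=0$ and $u_{tx}(0,t)=u_{tx}(L,t)=0$, and since $p$ does not depend on $t$ one is left with $\int_{0}^{L}p\,u_{xx}u_{txx}\,dx=\tfrac12\tfrac{d}{dt}\int_{0}^{L}p\,u_{xx}^{2}\,dx$. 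For the coupling term, one integration by parts together with $u_{t}=0$ at the endpoints gives $\kappa\int_{0}^{L}\theta_{xx}u_{t}\,dx=-\kappa\int_{0}^{L}\theta_{x}u_{tx}\,dx$. Collecting these,
\[
\frac12\frac{d}{dt}\left(\int_{0}^{L}u_{t}^{2}\,dx+\int_{0}^{L}p\,u_{xx}^{2}\,dx\right)+2\int_{0}^{L}q(x)\,u_{t}^{2}\,dx-\kappa\int_{0}^{L}\theta_{x}u_{tx}\,dx=0 .
\]

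Next I would multiply \eqref{102} by $\theta$ and integrate over $(0,L)$. Here $\int_{0}^{L}\theta_{t}\theta\,dx=\tfrac12\tfrac{d}{dt}\int_{0}^{L}|\theta|^{2}\,dx$, while $-\eta\int_{0}^{L}\theta_{xx}\theta\,dx=\eta\int_{0}^{L}|\theta_{x}|^{2}\,dx$ and $-\kappa\int_{0}^{L}u_{xxt}\theta\,dx=\kappa\int_{0}^{L}u_{xt}\theta_{x}\,dx$, where in both integrations by parts the boundary terms drop out because $\theta(0,t)=\theta(L,t)=0$. This yields
\[
\frac12\frac{d}{dt}\int_{0}^{L}|\theta|^{2}\,dx+\eta\int_{0}^{L}|\theta_{x}|^{2}\,dx+\kappa\int_{0}^{L}u_{xt}\theta_{x}\,dx=0 .
\]
Adding the two identities, the two $\kappa$-terms cancel exactly, and recalling the definition \eqref{201} of $\mathcal{E}$ we obtain \eqref{202}.

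The computation itself is routine; the only points requiring care are the justification of the integrations by parts and of differentiation under the integral sign — which is precisely what the hypothesis of a \emph{smooth enough} solution guarantees (it suffices, for instance, that $u_{t}$, $u_{txx}$, $\theta$ and $\theta_{xx}$ be continuous up to the boundary on $[0,T]\times[0,L]$) — and keeping track of the signs of the two coupling contributions so that they indeed cancel rather than add. No genuine obstacle is expected here; the lemma will later be upgraded to general finite-energy solutions by a density argument once the semigroup framework is in place.
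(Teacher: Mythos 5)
Your proof is correct and follows exactly the paper's own route: multiply \eqref{101} by $u_t$ and \eqref{102} by $\theta$, integrate over $(0,L)$, integrate by parts using the clamped/Dirichlet conditions \eqref{103}, and add so that the two $\kappa$-coupling terms cancel (the paper merely works with conjugates and takes real parts to cover complex-valued solutions, which changes nothing). The signs all check out, so there is nothing to add.
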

\begin{proof}
We multiply \eqref{101}, \eqref{102} by $\overline{u_{t}}$ and $\overline{\theta}$
respectively. Integrating in $x\in[0,\,L],$ using the boundary
condition \eqref{103}, summing and taking the real part, the result follows straightforward. 
\end{proof}

\begin{remark} The equations \eqref{201}-\eqref{202} can be rewritten as
\begin{equation*}
 \mathcal{E}(t) =  \frac{1}{2}
\left(\|\sqrt{p}\,u_{xx}\|_{L^{2}(0,\,L)}^{2} +
\|u_{t}\|_{L^{2}(0,\,L)}^{2} + \|\theta\|_{L^{2}(0,\,L)}^{2} \right)
\end{equation*}
and satisfies
\begin{equation*}
 \frac{d}{dt}\mathcal{E}(t) =
-\,2\,\|\sqrt{q}\,u_{t}\|_{L^{2}(0,\,L)}^{2} -
\eta\,\|\theta_{x}\|_{L^{2}(0,\,L)}^{2} .
\end{equation*}
\end{remark}

Let us now introduce the phase space associated to our problem. We 
use the standard $L^{2}(0,\,L)$ space, the scalar product
and norm are denoted by
\begin{equation*}
\langle u,\,v\rangle _{L^{2}(0,\,L)}=\int_{0}^{L}u\,\overline{v} \
dx,\qquad \|u\|_{L^{2}(0,\,L)}^{2}=\int_{0}^{L}u^{2}\ dx.
\end{equation*}
Hence, denote $\mathcal{H}=H_{0}^{2}(0,\,L)\times
L^{2}(0,\,L)\times L^{2}(0,\,L)$ endowed with the inner product
given by
\begin{eqnarray*}
\left\langle (u,\,v,\,\theta),\, (u_{1},\,v_{1},\,\theta_{1})
\right\rangle_{\mathcal{ H}}=
\int_{0}^{L}p(x)\,u_{xx}\,\overline{u}_{1xx}\ dx +
\int_{0}^{L}v\,\overline{v}_{1}\ dx + \int_{0}^{L}\theta\,
\overline{\theta}_{1}\ dx
\end{eqnarray*}
for $U=(u,\,v,\,\theta),$
$\widetilde{U}=(u_{1},\,v_{1},\,\theta_{1})$ and the norm
\begin{eqnarray*}
\|(u,\,v,\,\theta)\|_{\mathcal{H}}^{2} =
\|\sqrt{p}\,u_{xx}\|_{L^{2}(0,\,L)}^{2} + \|v\|_{L^{2}(0,\,L)}^{2} +
\|\theta\|_{L^{2}(0,\,L)}^{2}.
\end{eqnarray*}
We can easily show that the norm $\|\cdot\|_{\mathcal{H}}$ is
equivalent to the usual norm
in $\mathcal{H}$, so that the phase space $\mathcal{H}$ is a Hilbert space.

Finally, we have the Poincar\'{e} inequality
\begin{equation*}
\|u\|_{L^{2}(0,\,L)}^{2}\leq C_{p}\,\|u_{x}\|
_{L^{2}(0,\,L)}^{2},\quad \forall \;u\in \,H_{0}^{1}(0,\,L),
\end{equation*}
where $C_{p}$ is the Poincar\'{e} constant.\\

\subsection{Existence and uniqueness}

In order to establish the well-posedness, we reformulate our problem as an abstract Cauchy problem. 
Taking $u_{t}=v,$ the initial boundary value problem
\eqref{101}-\eqref{104} can be rewritten as:
\begin{equation}
\label{301}\frac{d}{dt}U(t)={\cal A}\,U(t),\quad U(0)=U_{0}, \quad
\forall \;t>0,
\end{equation}
where $\,U(t)=(u,\,v,\,\theta)^{T}\,$ and
$\,U_{0}=(u_{0},\,u_{1},\,\theta_{0})^{T}$.  The linear
operator  

$\mathcal{A}:\mathcal{D}(\mathcal{A})\subset
\mathcal{H}\rightarrow \mathcal{H}$ is given by
\begin{equation}
\label{302}\mathcal{A}\left(
\begin{array}{c}
u \\
v \\
\theta
\end{array}
\right) =\left(
\begin{array}{c}
v \\
-\,2\,q(x)\,v - (p(x)\,u_{xx})_{xx}
- \kappa\,\theta_{xx} \\
\kappa\,v_{xx} + \eta\,\theta_{xx} 
\end{array}
\right).
\end{equation}
It remains to define the domain $\mathcal{D} (\mathcal{A})$ of the operator: 
\begin{eqnarray*}
\mathcal{D}(\mathcal{A}) & = & \left\{(u,\,v,\,\theta)\in {\cal
H}:\; u\in H_{0}^{2}(0,\,L),\ -\ p(x)\,u_{xx} -
\kappa\,\theta\in H_{0}^{2}(0,\,L),\right.
\\
&  & \left.\quad \theta\in H_{0}^{2}(0,\,L),\ \kappa\,v +
\eta\,\theta\in H_{0}^{2}(0,\,L)\right\}.
\end{eqnarray*}

We are now ready to state our existence result. 

\begin{theorem}
\label{teo1} For any $U_{0}\in\mathcal{H}$ there exists a unique
solution $U(t)=(u,\,u_{t},\,\theta)$ of \eqref{101}-\eqref{104}  (or, equivalently, to the abstract Cauchy problem \eqref{301}), satisfying
\begin{align*}
& u\,\in C([0,\,\infty[;\ H_{0}^{2}(0,\,L))\cap C^{1}([0,\,\infty
[;\ L^{2}(0,\,L)) \\
& \theta\in C([0,\,\infty[;\ L^{2}(0,\,L)).
\end{align*}
However, if $\,U_{0}\in\mathcal{D}({\mathcal{A}})$ then
\begin{align*}
& u\in C^{2}([0,\,\infty[;\ H_{0}^{2}(\Omega))\cap C^{2}
([0,\,\infty[;\ L^{2}(0,\,L)) \\
&  \theta\in C([0,\,\infty[;\ H_{0}^{2}(0,\,L))\cap
C^{1}([0,\,\infty
[;\ L^{2}(0,\,L))\\
& -\ p\,u_{xx} + \kappa\,\theta,\ \kappa\,u_{t} + \eta\,\theta
\in C([0,\,\infty[;\ H_{0}^{2}(0,\,L)).
\end{align*}
\end{theorem}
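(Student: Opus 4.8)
The plan is to invoke the Lumer--Phillips theorem: I will show that $\mathcal{A}$ is the infinitesimal generator of a $C_0$-semigroup of contractions on $\mathcal{H}$, and then the two regularity statements follow from the standard semigroup dichotomy ($U_0\in\mathcal{H}$ gives a mild solution with the first set of regularities, while $U_0\in\mathcal{D}(\mathcal{A})$ gives a classical solution with the second). Concretely, I would first verify that $\mathcal{A}$ is \emph{dissipative}: for $U=(u,v,\theta)\in\mathcal{D}(\mathcal{A})$, compute $\operatorname{Re}\langle \mathcal{A}U, U\rangle_{\mathcal{H}}$ using the inner product on $\mathcal{H}$. The first component contributes $\operatorname{Re}\int p\, v_{xx}\overline{u}_{xx}$; the second contributes $\operatorname{Re}\int(-2q v-(p u_{xx})_{xx}-\kappa\theta_{xx})\overline{v}$; the third contributes $\operatorname{Re}\int(\kappa v_{xx}+\eta\theta_{xx})\overline{\theta}$. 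Integrating by parts twice in the troublesome terms (this is exactly where the definition of $\mathcal{D}(\mathcal{A})$ — membership of $pu_{xx}+\kappa\theta$ and $\kappa v+\eta\theta$ in $H_0^2(0,L)$ — is used to kill boundary terms), the terms $\int(pu_{xx})_{xx}\overline{v}$ and $\int p v_{xx}\overline{u}_{xx}$ cancel, the $\kappa$-coupling terms cancel, and what survives is precisely $\operatorname{Re}\langle\mathcal{A}U,U\rangle_{\mathcal{H}} = -2\int_0^L q(x)|v|^2\,dx - \eta\int_0^L|\theta_x|^2\,dx \le 0$, which is the operator form of Lemma~\ref{lemma:energy-equal}.

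The second requirement is that $\mathcal{A}$ be \emph{maximal}, i.e.\ that $\lambda I-\mathcal{A}$ be surjective for some $\lambda>0$ (equivalently $0\in\rho(\mathcal{A})$, which here is cleanest). So I would solve $\mathcal{A}U = F$ for arbitrary $F=(f_1,f_2,f_3)\in\mathcal{H}$: from the first line $v=f_1\in H_0^2$, and substituting into the remaining two lines leaves a coupled elliptic system for $(u,\theta)$,
\begin{equation*}
(p(x)u_{xx})_{xx} + \kappa\theta_{xx} = -f_2 - 2q(x)f_1, \qquad \eta\theta_{xx} = f_3 - \kappa (f_1)_{xx},
\end{equation*}
with $u\in H_0^2$, $\theta\in H_0^2\cap H^2$. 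The $\theta$-equation is an independent Poisson problem solvable by Lax--Milgram in $H_0^1$, giving $\theta$, and then $(p u_{xx})_{xx} = g$ with $g\in H^{-2}$ is a fourth-order problem that is coercive on $H_0^2(0,L)$ thanks to the lower bound $p\ge\alpha_1>0$ in \eqref{007} — again Lax--Milgram on the bilinear form $\int p\,u_{xx}\overline{w}_{xx}$, which is equivalent to the $H_0^2$ norm. One then checks the resulting $(u,v,\theta)$ lies in $\mathcal{D}(\mathcal{A})$: the regularity $pu_{xx}+\kappa\theta, \kappa v+\eta\theta\in H_0^2$ must be read off from the equations they satisfy (e.g.\ $-(pu_{xx}+\kappa\theta)$ has its second distributional derivative in $L^2$ and it vanishes with its first derivative at the endpoints because $u\in H_0^2$ and $\theta\in H_0^1$, after a short argument). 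Boundedness of the resolvent then gives $\|U\|_{\mathcal{H}}\le C\|F\|_{\mathcal{H}}$.

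With dissipativity and $0\in\rho(\mathcal{A})$ (hence the range condition $\operatorname{Ran}(I-\mathcal{A})=\mathcal{H}$) in hand, and noting $\mathcal{D}(\mathcal{A})$ is dense in $\mathcal{H}$ (it contains smooth compactly supported triples), the Lumer--Phillips theorem yields that $\mathcal{A}$ generates a $C_0$-semigroup of contractions $(e^{t\mathcal{A}})_{t\ge0}$ on $\mathcal{H}$. The stated regularity classes are then the textbook conclusions: $U_0\in\mathcal{H}\Rightarrow U\in C([0,\infty);\mathcal{H})$, which unpacks to $u\in C([0,\infty);H_0^2)\cap C^1([0,\infty);L^2)$ and $\theta\in C([0,\infty);L^2)$; and $U_0\in\mathcal{D}(\mathcal{A})\Rightarrow U\in C^1([0,\infty);\mathcal{H})\cap C([0,\infty);\mathcal{D}(\mathcal{A}))$, which gives the higher-order statement (with $\mathcal{D}(\mathcal{A})$ carrying exactly the $H_0^2$ information on $-pu_{xx}+\kappa\theta$ and $\kappa u_t+\eta\theta$). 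Uniqueness is immediate from the semigroup. The main obstacle is the surjectivity/resolvent step: one has to be careful that the fourth-order operator $u\mapsto(pu_{xx})_{xx}$ with the clamped conditions is genuinely invertible on $H_0^2(0,L)$ under only $p\in H^2$ with $p\ge\alpha_1$ — the variational formulation handles this, but identifying the correct weak/strong solution spaces and verifying the domain membership of the solution are the fiddly parts; the dissipativity computation, by contrast, is routine once the boundary terms are tracked via the definition of $\mathcal{D}(\mathcal{A})$.
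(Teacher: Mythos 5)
Your proposal follows essentially the same route as the paper: dissipativity of $\mathcal{A}$ via the same integration-by-parts computation yielding $\operatorname{Re}\langle\mathcal{A}U,U\rangle_{\mathcal{H}}=-2\int_0^L q|v|^2\,dx-\eta\int_0^L|\theta_x|^2\,dx$, then $0\in\rho(\mathcal{A})$ by solving $\mathcal{A}U=F$ in the same order (set $v=f$, solve the decoupled Poisson problem for $\theta$, then the fourth-order clamped problem for $u$ by Lax--Milgram using $p\ge\alpha_1$), and finally Lumer--Phillips with the standard semigroup regularity dichotomy. The argument is correct and, if anything, slightly more careful than the paper's about verifying that the resolvent output lies in $\mathcal{D}(\mathcal{A})$.
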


\begin{proof}
Firstly, we show that the operator $\mathcal{A}$ generates a
$C_{0}$-semigroup of contractions on the space $\mathcal{H}.$
We will show that $\mathcal{A}$ is a dissipative
operator and $0$ belongs to the resolvent set of $\mathcal{A}$,
denoted by $\varrho(\mathcal{A}).$ Then our conclusion will follow
using the well known Lumer-Phillips theorem \cite{pazy}. We compute:
\begin{eqnarray*}
\lefteqn{\left\langle\mathcal{A}U,\,U\right\rangle_{\mathcal{H}} =
\int_{0}^{L}p(x)\,v_{xx}\,\overline{u}_{xx}\ dx }\\
&  & + \int_{0}^{L}[-2\,q(x)\,v - (p(x)\,u_{xx})_{xx} -
\kappa\,\theta_{xx}]\,\overline{v}\ dx + \int_{0}^{L}(\kappa\,v_{x} +
\eta\,\theta_{x})_{x}
\,\overline{\theta}\ dx
\\
%& = & \int_{0}^{L}p(x)\,(v_{xx}\,\overline{u}_{xx} -
%\overline{v_{xx}\,\overline{u}_{xx}})\ dx -
%2\int_{0}^{L}q(x)\,|v|^{2}\ dx \\
%&  & +\ \kappa\int_{0}^{L}(\overline{v}_{x}\,\theta_{x} -
%\overline{\overline{v}_{x}\,\theta_{x}})\ dx -
%\eta\int_{0}^{L}|\theta_{x}|^{2}\,dx \\
& = & 2\,i\,Im\int_{0}^{L}p(x)\,v_{xx}\,\overline{u}_{xx}\ dx -
2\int_{0}^{L}q(x)\,|v|^{2}\ dx \\
&  & +\ 2\,i\,\kappa\,Im\int_{0}^{L}\overline{v}_{x}\,\theta_{x}\ dx -
\eta\int_{0}^{L}|\theta_{x}|^{2}\ dx .
\end{eqnarray*}
Taking real parts we obtain
\begin{equation}
\label{203}Re\left\langle\mathcal{A}U,\,U
\right\rangle_{\mathcal{H}} = -\
2\,\|\sqrt{q(x)}\,v\|_{L^{2}(0,\,L)}^{2} -
\eta\,\|\theta_{x}\|_{L^{2}(0,\,L)}^{2} .
\end{equation}
Thus $\mathcal{A}$ is a dissipative operator. Next, the domain $\mathcal{D} (\mathcal{A})$ is clearly dense in $\mathcal{H}$ and the operator is closed. 
Finally, let us show that $0\in\rho({\cal A})$, the resolvent of $\mathcal{A}$.  In fact, given
$F=(f,\,g,\,h)\in {\cal H},$ we must show that there exists a unique
$U=(u,\,v,\,\theta)$ in ${\cal D}({\cal A})$ such that ${\cal
A}U=F$, that is to say:
\begin{eqnarray}
\label{204}&  & v=f\quad \mbox{in}\quad H_{0}^{2}(0,\,L) \\
\label{205}&  & -\ 2\,q(x)\,v - (p(x)\,u_{xx})_{xx} -
\kappa\,\theta_{xx} =
g\quad \mbox{in}\quad L^{2}(0,\,L) \\
\label{206}&  & \kappa\,v_{xx} + \eta\,\theta_{xx}  = h\quad \mbox{in}\quad
L^{2}(0,\,L).
\end{eqnarray}
Taking $v=f$  in \eqref{206} we have
\begin{eqnarray}
\label{207}\theta_{xx}  \  = \  \frac{1}{\eta} \left( h - \kappa\,f_{xx}\right) \ \in \ L^{2}(0,\,L).
\end{eqnarray}
It is known that there is a unique $\theta\in H_{0}^{2}(0,\,L)$
satisfying \eqref{207} and
\begin{eqnarray*}
\|\theta_{xx}\|_{L^{2}(0,\,L)}\leq \, \frac{1}{\eta} \, \|h -
\kappa\,f_{xx}\|_{L^{2}(0,\,L)}\, \leq \, C\,\|F\|_{{\cal H}}
\end{eqnarray*}
for a positive constant $C.$ Moreover,  we have that $(\kappa\,v +
\eta\,\theta)_{xx} = h\in L^{2}(0,\,L)$ and we conclude that
$(\kappa\,v + \eta\,\theta)\in H_{0}^{2}(0,\,L).$ \\
Replacing \eqref{204} into \eqref{205} we have
\begin{eqnarray}
\label{208}&  & (p(x)\,u_{xx})_{xx}  = -\ g - 2\,q(x)\,f -
\kappa\,\theta_{xx} \quad \mbox{in}\quad L^{2}(0,\,L).
\end{eqnarray}
Moreover
\begin{eqnarray*}
\int_{0}^{L}u_{xx}^{2}\ dx =\int_{0}^{L}\frac{1}{p(x)}\
p(x)\,u_{xx}^{2}\ dx\leq \frac{1}{\alpha_{1}}
\int_{0}^{L}p(x)\,u_{xx}^{2}\ dx
=\frac{1}{\alpha_{1}}\,\|\sqrt{p(x)}\,u_{xx}\|_{L^{2}(0,\,L)}^{2}.
\end{eqnarray*}
It follows using the Lax-Milgram theorem that there exists a unique
function $u\in H_{0}^{2}(0,\,L)$ such that \eqref{208} holds in $L^{2}(0,\,L)$.
It is easy to show that $\|U\|_{{\cal H}}\leq C\,\|F\|_{{\cal H}}$
for a positive constant $C.$ Therefore we conclude that
$0\in\rho({\cal A}).$
 Therefore $\mathcal{A}$ is the generator of a $C^0$ semigroup of contractions and the proof of Theorem \ref{teo1} is complete. 
 \end{proof}

\renewcommand{\theequation}{\thesection.\arabic{equation}}
\setcounter{equation}{0}

\section{Asymptotic Behavior}\label{sec:expo}

In this section we show that the energy decays uniformly exponentially with time. We state our main result as follows. 
\begin{theorem}\label{teo2}
Let $u,\,$ $\theta$ be solution to system
\eqref{101}-\eqref{104}.   Then there exist
positive constants $M$ and $m$ such that
\begin{eqnarray*}
{\cal E}(t) \leq M\,{\cal E}(0)\,e^{-\,m\,t},\qquad \forall\;t\geq
0,
\end{eqnarray*}
where $\mathcal{E}$ is the total energy of the system, given by \eqref{201}.
\end{theorem}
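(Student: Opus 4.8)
The plan is to use the energy (Lyapunov) method: build a functional $\mathcal{L}(t)$ equivalent to $\mathcal{E}(t)$ and satisfying a differential inequality $\frac{d}{dt}\mathcal{L}(t)\le -m\,\mathcal{L}(t)$, whence the estimate follows by Gr\"onwall's lemma. All the computations below are first carried out for strong solutions, i.e.\ $U_0\in\mathcal{D}(\mathcal{A})$, and then extended to arbitrary $U_0\in\mathcal{H}$ by density and the continuity of the semigroup constructed in Theorem \ref{teo1}. By Lemma \ref{lemma:energy-equal} the natural dissipation controls $\int_0^L q(x)\,|u_t|^2\,dx$ and $\int_0^L|\theta_x|^2\,dx$; thanks to \eqref{007} and the Poincar\'e inequality, these in turn dominate $\|u_t\|_{L^2(0,L)}^2$ and $\|\theta\|_{L^2(0,L)}^2$. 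What the dissipation does \emph{not} see is the bending energy $\int_0^L p(x)\,u_{xx}^2\,dx$, and recovering control of this term is the heart of the argument.

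To this end I would introduce the auxiliary functional
\[
F(t)=\int_0^L u_t\,u\,dx+\int_0^L q(x)\,u^2\,dx,
\]
differentiate it along the flow and use \eqref{101} with two integrations by parts and the clamped/Dirichlet conditions \eqref{103}. Since $q$ is independent of $t$ one has $2\int_0^L q\,u_t u=\tfrac{d}{dt}\int_0^L q\,u^2$, and all boundary terms vanish; one is left with
\[
F'(t)=\|u_t\|_{L^2(0,L)}^2-\int_0^L p(x)\,u_{xx}^2\,dx-\kappa\int_0^L\theta\,u_{xx}\,dx.
\]
The thermo-mechanical coupling term is then handled by Young's inequality, using $p\ge\alpha_1$, so as to absorb a fixed fraction (say one half) of $\int_0^L p\,u_{xx}^2\,dx$ while leaving only a constant multiple of $\|\theta\|_{L^2(0,L)}^2\le C_p\|\theta_x\|_{L^2(0,L)}^2$, which is precisely a quantity already dominated by the thermal dissipation. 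This yields $F'(t)\le-\tfrac{1}{2}\int_0^L p\,u_{xx}^2\,dx+\|u_t\|_{L^2(0,L)}^2+C\,\|\theta_x\|_{L^2(0,L)}^2$.

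Finally I would set $\mathcal{L}(t)=N\,\mathcal{E}(t)+F(t)$ with $N$ a large parameter. Using $\|u\|_{L^2(0,L)}^2\le C_p\|u_x\|_{L^2(0,L)}^2\le C_p^2\|u_{xx}\|_{L^2(0,L)}^2\le\alpha_1^{-1}C_p^2\|\sqrt{p}\,u_{xx}\|_{L^2(0,L)}^2$ together with $q\le\alpha_4$, one gets $|F(t)|\le C_0\,\mathcal{E}(t)$, hence $(N-C_0)\,\mathcal{E}(t)\le\mathcal{L}(t)\le(N+C_0)\,\mathcal{E}(t)$, so that for $N>C_0$ the functionals $\mathcal{L}$ and $\mathcal{E}$ are equivalent. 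Combining \eqref{202} with the bound on $F'$ gives
\[
\mathcal{L}'(t)\le-(2N\alpha_3-1)\,\|u_t\|_{L^2(0,L)}^2-(N\eta-C)\,\|\theta_x\|_{L^2(0,L)}^2-\tfrac{1}{2}\int_0^L p(x)\,u_{xx}^2\,dx,
\]
and choosing $N$ large enough that the first two coefficients are positive yields $\mathcal{L}'(t)\le-c\big(\|u_t\|_{L^2(0,L)}^2+\|\theta_x\|_{L^2(0,L)}^2+\|\sqrt{p}\,u_{xx}\|_{L^2(0,L)}^2\big)$. One last application of Poincar\'e, $\|\theta\|_{L^2(0,L)}^2\le C_p\|\theta_x\|_{L^2(0,L)}^2$, shows the right-hand side is bounded above by $-c'\mathcal{E}(t)\le-\frac{c'}{N+C_0}\,\mathcal{L}(t)$, i.e.\ $\mathcal{L}'(t)\le-m\,\mathcal{L}(t)$. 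Integrating this inequality and using once more the equivalence of $\mathcal{L}$ and $\mathcal{E}$ gives $\mathcal{E}(t)\le M\,\mathcal{E}(0)\,e^{-mt}$ with $M=(N+C_0)/(N-C_0)$, which is the claim. The only genuine difficulty is the bookkeeping: calibrating the Young parameter and then $N$ so that the bending energy is truly recovered and the residual mechanical and thermal terms are absorbed by the dissipation of $\mathcal{E}$; the direct frictional damping $2q(x)u_t$ in \eqref{101} is exactly what makes one multiplier enough, with no need for an additional functional adapted to the heat equation \eqref{102}.
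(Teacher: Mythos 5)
Your proposal is correct and follows essentially the same route as the paper: the auxiliary functional $F$ is exactly the paper's $\mathcal{F}_1$, and your Lyapunov functional $N\,\mathcal{E}+F$ with $N$ large is just a rescaling of the paper's $\mathcal{G}=\mathcal{E}+\varepsilon\,\mathcal{F}_1$ with $\varepsilon$ small, with the same Young/Poincar\'e absorption of the coupling term $\kappa\int\theta\,u_{xx}$ and the same Gr\"onwall conclusion. Your closing observation that the frictional damping makes a single multiplier suffice is consistent with the paper, whose proof likewise uses only $\mathcal{F}_1$.
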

We will prove this theorem using multiplier technique and a suitable Lyapunov.
We begin with preliminaries.

\subsection{Preliminaries}

We recall the energy equality from Lemma \ref{lemma:energy-equal}, 
\begin{equation}
\label{502} \frac{d}{dt}\mathcal{E}(t) = -\
2\int_{0}^{L}q(x)\,u_{t}^{2}\ dx - \eta\int_{0}^{L}\theta_{x}^{2}\ dx.
\end{equation}
Let us now define an auxiliary function that will help building our Lyapunov:
\begin{align}\label{def-F1}
 {\cal F}_{1}(t) = \int_{0}^{L}u\,u_{t}\ dx +
\int_{0}^{L}q(x)\,u^{2}\ dx.
\end{align}
%and define also:
%\begin{align}
%\label{def-F2} {\cal F}_{2}(t) = \int_{0}^{L}\theta\,u\ dx +
%\frac{1}{2}\,\kappa\int_{0}^{L}u_{x}^{2}\ dx.
%\end{align}
 We conclude this subsection by a lemma to control the time derivatives of the auxiliary function $\mathcal{F}_1$.
\begin{lemma}\label{lem:esti-F1}
Let $u, \theta$ be a smooth solution to \eqref{101}-\eqref{104}. Then the auxiliary function $\mathcal{F}_1$ defined by \eqref{def-F1} satisfies the following estimates. There exists $\mu > 0$ such that for all $t > 0:$
\begin{equation}\label{esti-F1}
 - \mu \ \mathcal{E} (t) \ \leq \, \mathcal{F}_1 (t) \, \leq \, \mu \, \mathcal{E} (t).
 \end{equation}
Moreover, the derivative of $\mathcal{F}_1$ verifies:
\begin{equation}\label{esti-F1prime}
\frac{d}{dt} \mathcal{F}_1 (t) \, = \, - 2 \, \mathcal{E} (t) + \mathcal{R}_1(t). 
\end{equation}
with the remainder $\mathcal{R}_1$ such that: there exist non negative constants $C_1, \, C_2, \, C_3 \, > 0$, such that, for all $\alpha > 0$, for any $t > 0$, it holds:
\begin{equation}\label{esti-R1}
\mathcal{R}_1 (t)  \ \leq \,   C_1 \, \int u_{t}^2 \, + C_2 \left( 1 + \frac 1 \alpha \right) \, \int \theta_x^2  + C_3 \, \alpha \, \int p u_{xx}^2.
\end{equation}
\end{lemma}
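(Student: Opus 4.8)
The plan is to compute $\frac{d}{dt}\mathcal{F}_1(t)$ directly using the equations of motion and then organize the resulting terms into the ``main part'' $-2\mathcal{E}(t)$ plus a remainder $\mathcal{R}_1$. Differentiating \eqref{def-F1} gives
\[
\frac{d}{dt}\mathcal{F}_1(t) = \int_0^L u_t^2\,dx + \int_0^L u\,u_{tt}\,dx + 2\int_0^L q(x)\,u\,u_t\,dx.
\]
I would substitute $u_{tt} = -(p(x)u_{xx})_{xx} - 2q(x)u_t - \kappa\theta_{xx}$ from \eqref{101} into the middle term. Integrating $\int_0^L u\,(p(x)u_{xx})_{xx}\,dx$ by parts twice and using the clamped boundary conditions \eqref{103} produces $\int_0^L p(x)u_{xx}^2\,dx$; the term $-2\int_0^L q(x)u\,u_t\,dx$ from the substitution cancels exactly with the $+2\int_0^L q(x)u\,u_t\,dx$ coming from the $q(x)u^2$ piece of $\mathcal{F}_1$; and $-\kappa\int_0^L u\,\theta_{xx}\,dx = -\kappa\int_0^L u_{xx}\,\theta\,dx$ after integrating by parts twice (boundary terms vanish since $u,u_x$ vanish at the endpoints). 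Collecting, we obtain
\[
\frac{d}{dt}\mathcal{F}_1(t) = \int_0^L u_t^2\,dx - \int_0^L p(x)u_{xx}^2\,dx - \kappa\int_0^L u_{xx}\,\theta\,dx,
\]
and I would then write $\int u_t^2 - \int p u_{xx}^2 = -2\mathcal{E}(t) + 2\int u_t^2 + \int\theta^2$ using the definition \eqref{201} of $\mathcal{E}$, so that
\[
\mathcal{R}_1(t) = 2\int_0^L u_t^2\,dx + \int_0^L \theta^2\,dx - \kappa\int_0^L u_{xx}\,\theta\,dx.
\]

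Next I would bound $\mathcal{R}_1$ by the three integrals appearing in \eqref{esti-R1}. The $2\int u_t^2$ term is already of the right form (contributing to $C_1$). For the remaining two terms I use Poincaré twice: since $\theta\in H_0^1(0,L)$, $\int\theta^2 \le C_p\int\theta_x^2$, which feeds into the $\int\theta_x^2$ term. For the cross term, Young's inequality with parameter $\alpha$ gives
\[
\left|\kappa\int_0^L u_{xx}\,\theta\,dx\right| \le \frac{\kappa}{2\alpha\alpha_1}\int_0^L \theta^2\,dx + \frac{\kappa\alpha}{2}\int_0^L p(x)u_{xx}^2\,dx,
\]
using $1 \le p(x)/\alpha_1$ from \eqref{007} to convert $\int u_{xx}^2$ into $\int p u_{xx}^2$; a further application of Poincaré on $\int\theta^2$ turns the first piece into a $\frac{1}{\alpha}$-multiple of $\int\theta_x^2$. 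Combining these yields precisely a bound of the form $C_1\int u_t^2 + C_2(1+\frac1\alpha)\int\theta_x^2 + C_3\alpha\int p u_{xx}^2$ with $C_1,C_2,C_3$ depending only on $\kappa$, $\eta$, $C_p$, and $\alpha_1$.

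Finally, the estimate \eqref{esti-F1} follows from Cauchy--Schwarz and Young: $|\int u u_t| \le \frac12\int u^2 + \frac12\int u_t^2 \le \frac{C_p^2}{2}\int u_{xx}^2 + \frac12\int u_t^2$ (applying Poincaré twice, since $u\in H_0^2$), and $\int q(x)u^2 \le \alpha_4 C_p^2\int u_{xx}^2$; each of these is controlled by a constant times $\mathcal{E}(t)$ after using $\int u_{xx}^2 \le \frac{1}{\alpha_1}\int p u_{xx}^2$, giving $|\mathcal{F}_1(t)| \le \mu\mathcal{E}(t)$ for a suitable $\mu$. I do not expect any genuine obstacle here — the computation is a standard multiplier argument — but the step requiring the most care is the double integration by parts on $\int u\,(p(x)u_{xx})_{xx}\,dx$ and $\int u\,\theta_{xx}\,dx$, making sure all four boundary contributions vanish (they do, thanks to the clamped conditions $u(0)=u(L)=u_x(0)=u_x(L)=0$ and $\theta(0)=\theta(L)=0$), and the bookkeeping that confirms the $q$-terms cancel exactly so that no $\int q u\, u_t$ survives in $\mathcal{R}_1$.
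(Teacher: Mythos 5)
Your proposal is correct and follows essentially the same route as the paper: the same differentiation of $\mathcal{F}_1$, the same cancellation of the $q\,u\,u_t$ terms, the identical remainder $\mathcal{R}_1 = 2\int u_t^2 + \int\theta^2 - \kappa\int\theta\,u_{xx}$, and the same Young/Poincar\'e estimates for \eqref{esti-R1} and \eqref{esti-F1}. In fact your write-up is somewhat more explicit than the paper's (which skips the integration-by-parts details and contains a typo in the substituted term $(p(x)u_{xx})_{xx}$), but there is no substantive difference.
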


\begin{proof}
We begin with the first estimate. From the definition of $\mathcal{F}_1$, \eqref{def-F1}, 
On the other hand, using the classical Young and Poincar\'{e} inequalities successively, together with  the assumption on functions $p$ and $q$, \eqref{007}
we obtain
\begin{equation}\label{510}
\vert {\cal F}_{1}(t) \vert  
%& = & \int_{0}^{L}u\,u_{t}\ dx +
%\int_{0}^{L}q(x)\,u^{2}\ dx \nonumber  \\
%& \leq & \frac{L}{\pi}\int_{0}^{L}2\,u_{t}\,\frac{\pi}{2\,L}\,u\ dx
%+ C_{4}\int_{0}^{L}u^{2}\ dx \nonumber    \\
\,  \leq  \, \frac{1}{2} \,   \int_{0}^{L}\vert u_{t}\vert^{2}\ dx +   C (C_p, \alpha_1, \alpha_4) \, 
\int_{0}^{L}p(x)\,u_{xx}^{2}\ dx ,
\end{equation}
where $C (C_p, \alpha_1, \alpha_4)$ is a non negative constant, depending on Poincar\'e constants, and the upper and lower bounds of $p$ and $p$. The estimate \eqref{esti-F1} follows straightforward with  
\begin{equation*}
\mu \, = \,  \frac 1 2 + C (C_p, \alpha_1, \alpha_4).  
\end{equation*}
Next, we differentiate $\mathcal{F}_1$: by using  \eqref{101} and integrating by parts we have
\begin{eqnarray*}
\lefteqn{\frac{d}{dt}{\cal F}_{1}(t) = \int_{0}^{L}u\,u_{tt}\ dx +
\int_{0}^{L}u_{t}^{2}\ dx +
\frac{d}{dt}\int_{0}^{L}q(x)\,u^{2}\ dx } \nonumber \\
& = & \int_{0}^{L}u\left(-\ 2\,q(x)\,u_{t} - (p(x)\,u_{xx}) -
\kappa\,\theta_{xx}\right)dx + \int_{0}^{L}u_{t}^{2}\ dx +
\frac{d}{dt}\int_{0}^{L}q(x)\,u^{2}\ dx \nonumber  \\
& = & -\int_{0}^{L}p(x)\,u_{xx}^{2}\ dx -
\kappa\int_{0}^{L}\theta\,u_{xx}\ dx + \int_{0}^{L}u_{t}^{2}\ dx
\nonumber \\
& =  & - 2 \mathcal{E} + \mathcal{R}_1, 
\nonumber 
\end{eqnarray*}
with 
$$\mathcal{R}_1 \,  = \, 2 \, \int_0^L \, u_t^2 + \int_0^L \ \theta^2 - \kappa \, \int_0^L \, \theta \, u_{xx}. $$
Hence, using the Poincar\'{e} inequality, \eqref{007} and the generalized Young's inequality, we have, for all $\alpha > 0$, for all $t >0$:
\begin{eqnarray*}
\vert {\cal R}_{1}(t) \vert &\leq & 2 \, \int_0^L \, u_t^2 + C_p \ \int_0^L \ \theta_x^2 + \frac{C_p}{2 \, \alpha} \, \int_0^L \ \theta_x^2  + \frac{1}{2 \, \alpha_1}\, \alpha \, \int_0^L \, p(x)\, u_{xx}^2.
\end{eqnarray*}
Estimate \eqref{esti-R1} follows directly. We will chose a particular $\alpha > 0$ in the following. 
\end{proof}

With this Lemma, we are now ready to prove the exponential stability.

\subsection{Proof of Theorem \ref{teo2}.}

We  introduce a Lyapunov functional:
\begin{equation}
\label{def-G}{\cal G}(t) = {\cal E}(t) + \varepsilon \, \left({\cal F}_1(t) + \mathcal{F}_2  (t)\right),
\end{equation}
where $\varepsilon > 0$ is a positive constant to be adjusted later (as the $\alpha > 0$ of the previous Lemma).

Let us compute the derivative of $\mathcal{G}$. From \eqref{502}, together with \eqref{esti-F1prime} and \eqref{esti-R1} from Lemma \ref{lem:esti-F1}, we get:

\begin{equation}\label{509}
\begin{array}{lcl}
\frac{d}{dt}{\cal G}(t) & = &  -\ 2\int_{0}^{L}q(x)\,u_{t}^{2}\ dx
- \eta\int_{0}^{L}\theta_{x}^{2}\ dx    - 2 \ \varepsilon \, \mathcal{E}  (t)  + \varepsilon \, \mathcal{R}_1 (t)    \\
&&
\\
& \leq & -\ \left(2 - \varepsilon \, \frac{C_1}{\alpha_3}\right) \, \int_{0}^{L}q(x)\,u_{t}^{2}\ dx -
 \left(\eta - \varepsilon \, C_{2}\left(1 + \frac 1 \alpha \right) \right) \, \int_{0}^{L}\theta_{x}^{2}\ dx
  \\
  &&
  \\
&  & -  \varepsilon\,  \left(\ 2 \, \mathcal{E} (t)  - \, \alpha \, C_3 \, \int_0^L \, p u_{xx}^2 \right).  
\end{array}
\end{equation}
Now we will chose the constants $\varepsilon$ and $\alpha$. We need to control the last term in \eqref{509} in order to have a good differential inequality. We also need to ensure the positivity of the Lyapunov $\mathcal{G}$.

On the one hand, from the definition of the total energy, \eqref{201}, it holds:
$$  2 \mathcal{E} (t)  - C_3 \, \alpha \,  \int_0^L \, p u_{xx}^2  \, = \,  \left(1 - \alpha \, C_3 \right) \, \int_0^L \, p u_{xx}^2  + \int_{0}^{L}u_{t}^{2}\ dx +
\int_{0}^{L}|\theta|^{2}\ dx.$$
Hence, choosing $\alpha > 0 $ small enough so that 
$$  K \, : = \, 1 - \alpha \, C_3 \ > \ 0,$$
 we get, injecting in \eqref{509}:
 \begin{equation*}
\frac{d}{dt}{\cal G}(t) \, \leq \, - \ 2 \, K \, \varepsilon \, \mathcal{E} (t)   - \  K_1 \, \int_{0}^{L}q(x)\,u_{t}^{2}\ dx -  \ K_2  \, \int_{0}^{L}\theta_{x}^{2}\ dx
\end{equation*}
where $K > 0$ is defined above, while  $\alpha$ is now fixed to ensure $K > 0$, and:
$$K _1 \ = \ \left(2 - \varepsilon \, \frac{C_1}{\alpha_3}\right), \, \, \, K_2 \ = \  \eta - \varepsilon \, C_{2}\left(1 + \frac 1 \alpha \right).$$

On the other hand, from the definition of $\mathcal{G}$, \eqref{def-G}, and from \eqref{esti-F1} in Lemma \ref{lem:esti-F1}, we have, for all $t > 0$:
\begin{equation}\label{esti-G}
 \left( 1 - \varepsilon \ \mu \right) \, \mathcal{E} (t) \, \leq \,  \mathcal{G} (t) \, \leq \,  \left( 1 + \varepsilon \ \mu \right) \, \mathcal{E} (t). 
 \end{equation}
Therefore, it suffices now to choose $\varepsilon > 0$, small enough so that:
\begin{equation}\label{positiv-K}
\left\{ \begin{array}{l} 
1 - \varepsilon \ \mu  \ > \ 0,
\\
K _1 \ = \ \left(2 - \varepsilon \, \frac{C_1}{\alpha_3}\right) \ > \ 0, 
\\
K_2 \ = \  \eta - \varepsilon \, C_{2}\left(1 + \frac 1 \alpha \right) \ > \ 0.
\end{array}
\right. 
\end{equation}
 Hence, the positivity of the $K_i$, $i = 1, 2$  gives:
  \begin{equation}\label{esti-Gprime}
\frac{d}{dt}{\cal G}(t) \, \leq \, - \ 2 \, K \, \varepsilon \, \mathcal{E} (t),
\end{equation}
where $\varepsilon$ is now fixed, so that  \eqref{positiv-K}  is satisfied. 
Finally combining \eqref{esti-G} with \eqref{esti-Gprime} integrated in time, we get, for all $t > 0$:

  $$\mathcal{E} (t) \, \leq \,   M_0  - \frac{2 K \varepsilon}{ (1 - \varepsilon \, \mu) \, }  \, \int_0^t \mathcal{E} (s) d s,  $$
 with 
 $$M_0 \ = \ \frac{\mathcal{G} (0)}{1 - \varepsilon \, \mu} \ = \, \frac{1 + \varepsilon \, \mu}{1 - \varepsilon \, \mu} \, \mathcal{E} (0) \, \geq \  0.$$ 
 The conclusion  of Theorem $\ref{teo2}$, that is 
 $$\mathcal{E} (t) \, \leq \, M \, \mathcal{E} (0) \, e^{- m\, t}, $$ 
  follows immediately from the Gronwall Lemma, with:
 $$ m \ = \ \frac{2 \ K \ \varepsilon}{1 - \varepsilon \, \mu} \ > \ 0, \quad M \ = \ \frac{1 + \varepsilon \, \mu}{1 - \varepsilon \, \mu},$$
 and the proof is complete.

\section{Numerical simulation}\label{sec:numeric}

In this section we provide a numerical illustration of Theorem \ref{teo2}, that is the stabilization of the vibrations thanks to the thermal effect. Few numerical simulations have been don for a EB type equation, coupled with a damping and/or heat conduction. See for example \cite{FEM-stab-beam-Zhang}, where the authors use a Finite Element scheme in space, but do no care about the time discretization, since they are interested in the Optimal Control Model.  Here we want to illustrate our result of exponential stability for physically relevant initial conditions. Therefore, since we do not aim at designing a high order scheme, we consider a classical  second order Finite Difference scheme in both time and space (see for example \cite{smith1985}), taking care with the discretization of the bilaplacian operator ( \cite{kantorovitch, numeric-multiD-Liu-Sun, richardson}). 
We consider the following initial conditions: 

$$\left\{
\begin{array}{l}
u(x,0) \, = \, 0 , \, \, \forall \ x \in \ (0, L),
\\
\theta(x,0)\, =\,  A \, e^{- \frac{x^2}{\sigma^2}}, \, \, \forall \ x \in \ (0, L),
\end{array}
\right.
 $$
 with $A \ = \ 0.2$, whereas the standard deviation $\sigma^2$ changes from $10^{-4}$ to $10^{-2}$. 
 These initial conditions represent a possible physical experiment, don to test the response of a beam (the deformation) when we apply an initial heat pulse in the center of the beam, see \cite{wang-xu2001}.

 We plot in Figure \ref{fig1} the discrete energy (computed with discrete integration at all discrete times $t_n$, of the continuous expression \eqref{201}),
 versus time in $Log-Log$ scale, and observe the exponential decay for three values of the standard deviation.

\section*{Conclusion}
 In this work, we show  the exponential stabilization of an EB type model of  flexible inhomogeneous beam, taking into account frictional damping and heat conduction of Fourier type. We illustrate our result numerically, considering 
physically relevant initial conditions, and our result generalizes the results obtained for the wave equation and for EU models without heat. We think that this result is a first step toward a better understanding of more complicated models, such as the one derived in \cite{abouel-zenkour2015}.

 \newpage
 
 \section*{Acknowledgements}
Octavio Vera thanks the support of the Fondecyt project 1121120. 
We thank Centro CRHIAM Project Conicyt/Fondap-15130015 and Red Doctoral REDOC.CTA.
Amelie Rambaud thanks the support of Fondecyt project 11130378.

\begin{figure}[ht]\label{fig1}
\begin{center}
\includegraphics[width=6cm]{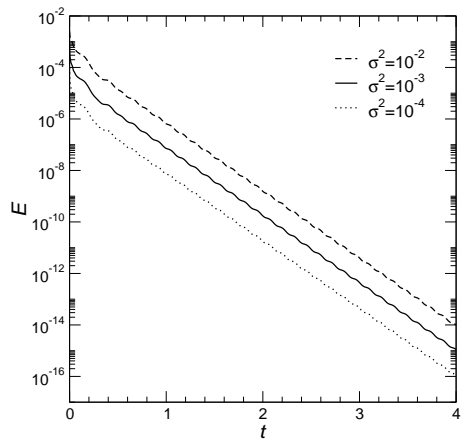}
\caption{Exponential decay}
\end{center}
\end{figure}

\end{document}